\definecolor{blue}{RGB}{197,77,87}
\definecolor{green}{RGB}{200,244,99}
\definecolor{red}{RGB}{78,205,196}
\definecolor{grey}{RGB}{197,77,87}
\newtheorem{theorem}{Theorem}%[section]
  \newtheorem{lemma}[theorem]{Lemma}
   \newtheorem{corollary}[theorem]{Corollary}
  \theoremstyle{definition}
  \newtheorem{definition}[theorem]{Definition}
\newcommand{\RR}{\mathbb R}
\newcommand{\QQ}{\mathbb Q}
\newcommand{\cI}{\mathcal I}
\newcommand{\cB}{\mathcal B}
\newcommand{\cM}{\mathcal M}
\DeclareMathOperator{\val}{val}
\DeclareMathOperator{\init}{in}
\DeclareMathOperator{\Trop}{Trop}
\DeclareMathOperator{\rank}{rank}
\DeclareMathOperator{\spann}{span}
\DeclareMathOperator{\cone}{cone}
\date{\today}
\begin{document}

 \title[Algebraic Matroids and Set-theoretic Realizability of Tropical Varieties]{Algebraic Matroids and Set-theoretic Realizability of Tropical Varieties}
  \author{Josephine Yu}
 \address{School of Mathematics, Georgia Institute of Technology,
        Atlanta GA, USA}
\email {jyu@math.gatech.edu}
%\thanks {}
%\date{\today}

 \begin{abstract}
To each prime ideal in a polynomial ring over a field we associate an algebraic matroid and show that it is preserved under tropicalization.  This gives a necessary condition for a tropical variety to be set-theoretically realizable from a prime ideal.  We also show that there are infinitely many Bergman fans that are not set-theoretically realizable as the tropicalization of any ideal.
 \end{abstract}

 \maketitle

Let $P$ be a prime ideal in the polynomial ring $K[x_1,\dots,x_n]$ over
a field $K$.   Algebraic independence over
$K$ gives a matroid structure on the set $\{[x_1],\dots,[x_n]\} \subset
K[x_1,\dots,x_n]/P$ where $[x_i]$ denotes the coset of $x_i$
in the quotient ring. Matroids that arise this way are said to be {\em
  algebraic} over $K$.\footnote{This definition is equivalent to the
  more common definition of algebraic matroids using algebraic
  independence over $K$ among elements in an extension field $L
  \supset K$; one can see this by
considering the $K$-algebra homomorphism from $K[x_1,\dots,x_n]$ to
$L$ that sends $x_i$'s to the matroid elements in $L$.}  
 Vector matroids are algebraic
because the prime ideal $P$ can be taken to be the ideal generated by linear relations
among the vectors. The class of algebraic matroids is
closed under taking minors, but it is not known whether it
is closed under taking duals~\cite{Oxley}.

Non-algebraic matroids exist.  Ingleton and Main showed that the Vamos matroid,
which is a rank $4$ self-dual matroid on an $8$ element set, is not algebraic
over any field~\cite{IngletonMain}.  Lindstr\"om constructed an infinite class
of non-algebraic matroids of rank $3$ all of whose proper minors are
algebraic~\cite{Lindstrom87} and an infinite class of non-algebraic
matroids, one for each rank $\geq 4$, such that no member of this
class is a minor of another~\cite{Lindstrom88}. 

A subset $S\subset \{1,\dots,n\}$ is independent in the algebraic
matroid of a prime ideal $P$ if and only if $P
\cap K[x_s : s\in S] = \{ 0 \}$.  Note that after fixing a generating
set of $P$, replacing $K$ by any extension field
does not change this last condition as it can be checked using
Gr\"obner basis computations over $K$.  Moreover, if a matroid is algebraic over
$K$, then it is algebraic over any extension of $K$~\cite{Oxley}.  
We may assume that $K$ is algebraically closed.
Let $V(P)$ be the variety in $K^{[n]}$ defined by 
$P$. By Hilbert's Nullstellensatz, a subset
$S \subset \{1,\dots,n\}$ is independent in the algebraic matroid of $P$ if
and only if the projection of $V(P)$ onto the coordinate subspace $K^S$
is onto.

If $P$ contains a monomial, then it must also contain some
$x_i$ because it is prime.  This means that the algebraic matroid of
$P$ contains a loop (a one-element dependent set).   By removing the
loops and the corresponding variables if necessary, we will only deal
with loop-free matroids and monomial-free prime ideals. In this case, instead of considering the variety $V(P)$ in $K^{[n]}$, we can
consider the variety $V(P) \cap (K^*)^{[n]}$, and the algebraic matroid is characterized
by surjectivity of coordinate projections as before.

We wish to apply the same construction of algebraic matroids to tropical varieties.
A {\em tropical variety}\footnote{Elsewhere in the
  literature, such as in the book ~\cite{MaclaganSturmfels}, the term  {\em tropical
    variety} is used only for tropicalizations of ideals; however here
  we use it for fans that may or may not arise from an ideal.} in $\QQ^n$ is a pure weighted balanced rational
polyhedral fan.\footnote{We could have used $\RR$ instead of $\QQ$,
  but in the proof of Lemma~\ref{lem:same} we will need to use a larger
  extension field whose value group is $\RR$.} A positive integer,
called weight, is assigned to (interior points of) each maximal cone of the tropical
variety, and the weights satisfy a {\em balancing condition} along
each ridge, which means that the weighted sum of primitive integer
vectors pointing from the ridge into each incident facet lies in the
linear span of the ridge~\cite[\S3.3]{MaclaganSturmfels}.  We consider two
pure weighted balanced rational polyhedral fan to be the same tropical
variety if they have the same ground set and the weights agree on a
dense subset.  We will not dwell on the details about the weights as the main focus here is the projections of the ground set.

\begin{definition}
 The {\em independence
  complex } $\cI(T)$ of a tropical variety $T \subset \QQ^n$ is the collection of subsets $S \subseteq
\{1,\dots,n\}$ such that the image of the projection of $T$ to the coordinate
subspace $\QQ^S$ has dimension~$|S|$.  
\end{definition}
Since projection of tropical
varieties are again tropical varieties~\cite{JensenYu}, by the balancing condition,
the condition that the image of $T$ is full-dimensional is equivalent to the
condition that the image of $T$ is all of $\QQ^S$.

For an ideal $J \subset  K[x_1,\dots,x_n]$, the {\em
  tropicalization} of $J$ (or the tropicalization of the variety $V(J)$) is 
$$
\Trop(J) = \{-w \in \QQ^n : \init_w(J) \text{ does not contain a
  monomial} \},
$$
which has a polyhedral fan structure derived, for instance,
from the Gr\"obner fan of a homogenization of $J$. If $J$ is equidimensional, then $\Trop(J)$ is a tropical
variety, where the weight at a generic point $w\in \Trop(J)$ is
defined as the sum of multiplicities of the initial ideal $\init_w(J)$
along its monomial-free minimal associated
primes~\cite{MaclaganSturmfels}.  For two ideal $I_1,I_2$ of the
same dimension, we have $\Trop(I_1 \cap I_2) = \Trop(I_1) \cup
\Trop(I_2)$ as sets, and the weight of a point in $\Trop(I_1 \cap I_2)$ is the sum of
its weights in $\Trop(I_1)$ and $\Trop(I_2)$, where the weight is $0$
if the point is not the tropical variety.
An important question in tropical geometry is to determine if a
tropical variety with its weights is {\em realizable}, that
is, if it is the tropicalization of an ideal.  After fixing a generating set of $J$, the tropicalization remains the same if $K$ is replaced by any extension, as
the tropicalization can be computed using Gr\"obner bases
over $K$.  We say that a tropical variety is {\em set-theoretically realizable} if it is the ground set of a tropicalization of an ideal, i.e.\ we disregard the weights.  

The following lemma shows that tropicalization preserves algebraic matroids of prime ideals.   The {\em independence
  complex of a matroid} is the collection of independent sets in the matroid.
\begin{lemma}
\label{lem:same}
For any monomial-free prime ideal $P$ in $K[x_1,\dots, x_n]$, the
independence complex $\cI(\Trop(P))$ coincides with the independence
complex of the algebraic matroid of $P$.
\end{lemma}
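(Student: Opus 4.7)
\medskip
\noindent\textbf{Proof plan.}
The plan is to verify both containments by translating independence on each side into a dominance-type statement about the coordinate projection $\pi_S$, and then matching the two. On the algebraic side, $S$ is independent in the matroid of $P$ iff $P \cap K[x_s : s \in S] = (0)$, which (using that $K$ is algebraically closed and $P$ is monomial-free) is equivalent to the projection $\pi_S \colon V(P) \cap (K^*)^{[n]} \to (K^*)^S$ having Zariski-dense image; this is the reformulation already recorded in the paper. On the tropical side, the balancing remark immediately preceding the lemma says that $S \in \cI(\Trop(P))$ iff $\pi_S(\Trop(P)) = \QQ^S$.

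For the implication \emph{dependent} $\Rightarrow$ \emph{not in $\cI(\Trop(P))$}, I would pick any nonzero $f \in P \cap K[x_s : s \in S]$. Since $P$ is monomial-free, $f$ has at least two terms, so the tropical hypersurface $\Trop(f) \subsetneq \QQ^S$ is a pure polyhedral complex of dimension $|S|-1$. As $f$ depends only on $x_s$ and lies in $P$, every $w \in \Trop(P)$ satisfies $\pi_S(w) \in \Trop(f)$, whence $\pi_S(\Trop(P)) \subseteq \Trop(f) \subsetneq \QQ^S$.

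For the implication \emph{independent} $\Rightarrow$ \emph{in $\cI(\Trop(P))$}, I would invoke the projection theorem for tropical varieties from \cite{JensenYu} already cited in the paper, together with the Bieri--Groves theorem. The projection theorem identifies $\pi_S(\Trop(P))$ with the tropicalization of the Zariski closure of $\pi_S(V(P) \cap (K^*)^{[n]})$; when $S$ is algebraically independent this closure is all of $(K^*)^S$, whose tropicalization is $\QQ^S$. To run this step on firm geometric footing I would first base-change to an algebraically closed extension $L \supset K$ with value group $\RR$, as the paper foreshadows. Since this extension affects neither the matroid nor $\Trop(P)$, the fundamental theorem of tropical geometry becomes available and coordinatewise valuation commutes visibly with $\pi_S$.

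The main obstacle is the careful use of the projection-commutes-with-tropicalization statement. One must work consistently in the torus so that monomial-freeness of $P$ lets the elimination ideal $P \cap K[x_s : s \in S]$ also represent $P \cap K[x_s^{\pm 1} : s \in S]$, and one should appeal to Bieri--Groves to convert the equality of tropicalizations into an equality of dimensions. These are bookkeeping points; the conceptual core is the dictionary between algebraic dominance of $\pi_S$ on $V(P)$ and set-theoretic surjectivity of $\pi_S$ on $\Trop(P)$.
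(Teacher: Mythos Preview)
Your proposal is correct. The paper's own proof is a two-line argument: pass to an algebraically closed valued extension $\widetilde K$ with value group $\QQ$, invoke the Fundamental Theorem to write $\Trop(P)=\val\bigl(V_{\widetilde K}(P)\cap(\widetilde K^*)^n\bigr)$, and observe that coordinatewise valuation commutes with $\pi_S$; since independence on both sides is witnessed by surjectivity of $\pi_S$, the two complexes coincide.

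Your argument reaches the same conclusion but unbundles the two containments. For ``dependent $\Rightarrow S\notin\cI(\Trop(P))$'' you give a strictly more elementary proof than the paper: picking $f\in P\cap K[x_s:s\in S]$ and noting $\pi_S(\Trop(P))\subseteq\Trop(f)\subsetneq\QQ^S$ uses only the initial-ideal definition of $\Trop$ and avoids the Fundamental Theorem entirely. For ``independent $\Rightarrow S\in\cI(\Trop(P))$'' you invoke the projection/elimination statement $\pi_S(\Trop(P))=\Trop(P\cap K[x_s^{\pm1}:s\in S])$, which is exactly the content of the paper's ``valuation commutes with projection'' line, just phrased on the ideal side rather than the point side. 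Two minor remarks: the reference~\cite{JensenYu} is cited in the paper for the fact that pushforwards of balanced fans are balanced, not for the elimination statement you need, so you should attribute that to the Fundamental Theorem or to tropical elimination theory directly; and Bieri--Groves is superfluous here, since once the elimination ideal is zero its tropicalization is visibly all of $\QQ^S$ without any dimension count.
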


\noindent In particular, for a tropical variety to be realizable as the tropicalization of a prime ideal, it is necessary that its independence complex forms an algebraic matroid.

\begin{proof}
Let $\widetilde K$ be an algebraically closed extension field of $K$
with a valuation $$\val: {\widetilde K}^* \rightarrow \QQ \text{ with
} \val(K^*) = 0,$$ whose value
group is equal to $\QQ$.  More concretely, if $K$ has characteristic~$0$, then
we can take $\widetilde K$ to be the field of Puiseux series over $\overline{K}$, and
if $K$ has positive characteristic, then we can take $\widetilde K$ to be
the field of generalized power series or Hahn series over $\overline{K}$ as in~\cite{Kedlaya}.
As noted above, replacing $K$ by the extension field $\widetilde K$ changes neither
the algebraic matroid nor the tropicalization of $P$.

By the Fundamental Theorem of Tropical
Geometry~\cite{MaclaganSturmfels}, we have
$$
\Trop(P) = \{(\val(x_1),\dots,\val(x_n)) : (x_1,\dots,x_n)  \in
V_{\widetilde K}(P) \cap 
({\widetilde K}^*)^n\}.
$$
Since taking valuation commutes with coordinate projections, the
result follows.
\end{proof}

We will now show that that for
every loop-free matroid $M$, algebraic or not, there is a tropical variety whose
independence complex is $M$.
 For any loop-free matroid $M$ of rank $r$ on $n$ elements, one can
construct a tropical variety of dimension $r$ in $\QQ^n$, called the
Bergman fan $\cB(M)$ of $M$ as follows~\cite{ArdilaKlivans}. Using the
{\em min} convention in tropical geometry, the Bergman fan $\cB(M)$
is the union of cones of the form 
$$
\cone \{\chi_{F_1}, \dots, \chi_{F_{k}}\} + \QQ (1,1,\dots,1)
$$
where $F_1 \subsetneq \cdots \subsetneq F_{k} \subsetneq M$ is a chain
of flats of the
matroid and the vector $\chi_{F} \in \{0,1\}^{[n]}$ denotes the indicator function of
$F$.  The Bergman fan of any matroid, with all weights equal to~$1$,
forms a tropical variety.  The balancing condition can be proved using
the covering partition property of flats of a matroid~\cite[\S2.2]{HuhThesis}.  If the matroid $M$ is representable as a vector matroid over
$K$, then the Bergman fan $\cB(M)$ is the tropicalization of the
ideal generated by linear relations among the vectors.   In this case, by
Lemma~\ref{lem:same}, the independence complex of $\cB(M)$ forms the
independence complex of $M$.  In fact, this is true for all matroids.

\begin{lemma}
\label{lem:Bergman}
For any loop-free matroid $M$, the independence complex $ \cI(\cB(M))$
of the Bergman fan $\cB(M)$ coincides with the independence complex of $M$.  
\end{lemma}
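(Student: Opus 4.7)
The plan is to compute $\dim \pi_S(\cB(M))$ for the coordinate projection $\pi_S : \QQ^n \to \QQ^S$ and show it equals $\rank_M(S)$; since by definition $S \in \cI(\cB(M))$ iff $\dim \pi_S(\cB(M)) = |S|$, the lemma then follows. I will work cone by cone. A maximal cone of $\cB(M)$ is determined by a chain of flats $F_1 \subsetneq \cdots \subsetneq F_{r-1}$, and together with the lineality direction $\mathbf{1} = \chi_M$ its linear span is $\spann\{\chi_{F_i} : 1 \leq i \leq r\}$ with $F_r = M$. Projecting to $\QQ^S$ gives $\spann\{\chi_{F_i \cap S} : 1 \leq i \leq r\}$, whose dimension equals the number of strict inclusions in the chain $\emptyset \subseteq F_1 \cap S \subseteq \cdots \subseteq F_r \cap S = S$.

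For the upper bound $\dim \pi_S(\cB(M)) \leq \rank_M(S)$, the key observation is that a strict inclusion $F_{i-1} \cap S \subsetneq F_i \cap S$ adds an element lying in $F_i \setminus F_{i-1}$. Because $F_{i-1}$ is an $M$-flat, this element lies outside the $M$-closure of $F_{i-1} \cap S$, so $\rank_M(F_i \cap S) > \rank_M(F_{i-1} \cap S)$. The number of strict inclusions is therefore at most $\rank_M(S)$.

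For the lower bound, I plan to exhibit a single maximal cone whose image has dimension $\rank_M(S)$. Set $r_S := \rank_M(S)$, choose an independent subset $\{s_1,\dots,s_{r_S}\} \subseteq S$ maximal in $S$ (so it forms a basis of $M|_S$), extend it to a basis of $M$, and let $F_i$ be the closure in $M$ of the first $i$ basis elements. For $i \leq r_S$ one has $\{s_1,\dots,s_i\} \subseteq F_i \cap S \subseteq F_i$, forcing $\rank_M(F_i \cap S) = i$, so these sets are pairwise distinct; moreover $F_{r_S} \cap S = S$ since every element of $S$ lies in the closure of $\{s_1,\dots,s_{r_S}\}$. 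This gives $r_S$ strict inclusions, producing the desired dimension.

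The main subtlety lies in this last step: the sets $F_i \cap S$ can strictly contain $\{s_1,\dots,s_i\}$ when $S$ has non-trivial dependencies, but the rank argument still forces the chain to have exactly $r_S$ strict inclusions. Combining the two bounds yields $\dim \pi_S(\cB(M)) = \rank_M(S)$, which equals $|S|$ precisely when $S$ is independent in $M$.
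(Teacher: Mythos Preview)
Your proof is correct and follows essentially the same approach as the paper: both argue cone by cone, using for the upper bound that the projected indicators $\chi_{F_i\cap S}$ form a chain whose length is bounded by $\rank_M(S)$, and for the lower bound the chain of closures of an independent subset of $S$. The only difference is that you establish the sharper equality $\dim \pi_S(\cB(M)) = \rank_M(S)$ for every $S$, whereas the paper only records the dichotomy $=|S|$ versus $<|S|$; your rank-increase argument for strict inclusions is the same matroid fact the paper uses when it observes that the $F_i\cap S$ are flats of $M|_S$.
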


\begin{proof}
Suppose $S = \{s_1,\dots,s_k\} \subseteq \{1,\dots,n\}$.  If $S$ is
independent in the matroid $M$, then we get a chain of flats
$$\spann\{s_1\} \subsetneq \spann\{s_1,s_2\} \subsetneq \cdots \subsetneq
\spann\{s_1,\dots,s_k\} \text{ in } M.$$ The projection of the cone spanned by their indicator functions has full dimension in $\QQ^S$.  

For the converse, suppose $S$ is dependent in $M$.  For any
chain of flats $$F_1 \subsetneq \cdots \subsetneq F_{k} \subsetneq M,$$
we get a chain of flats in the matroid on $S$ obtained from $M$ by
restriction:
$$F_1 \cap S \subset \cdots \subset F_{k}
\cap S \subset S.$$
Since $\rank(S) < |S|$, the chain above contains at most $|S|-1$
different flats in $S$.  Since the projection of $\chi_F$ is the same as the
projection of $\chi_{F \cap S}$ onto the $S$-coordinates, the
projection of no cone in $\cB(M)$ can have full dimension in $\QQ^S$.
\end{proof}

\begin{theorem}
\label{thm:main}
If the Bergman fan of a matroid $M$ is set-theoretically realizable over a field $K$, then $M$ is algebraic over $K$.
\end{theorem}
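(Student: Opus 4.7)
My strategy is to show that if $\cB(M)$ is set-theoretically realizable by an ideal $J$, then it is in fact the tropicalization of some minimal prime of $J$; once this is in hand, Lemma~\ref{lem:same} immediately yields that $M$ is algebraic. Let $J \subseteq K[x_1,\dots,x_n]$ realize $\cB(M)$ set-theoretically, and let $P_1,\dots,P_k$ be the monomial-free minimal primes of $J$; the remaining primes contribute nothing to $\Trop$. Then as sets $\cB(M) = \Trop(J) = \bigcup_i \Trop(P_i)$, and each $\Trop(P_i)$ is itself a balanced tropical variety with positive integer weights coming from the multiplicities along $P_i$.

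Since $\cB(M)$ is pure of dimension $r = \rank M$ and equals the finite set-theoretic union of the $\Trop(P_i)$'s, at least one of them has dimension $r$; fix such a prime and call it $P$. Assuming the claim $\Trop(P) = \cB(M)$ as sets, Lemma~\ref{lem:same} gives
\[
\cI(M_P) \;=\; \cI(\Trop(P)) \;=\; \cI(\cB(M)) \;=\; \cI(M),
\]
where the last equality is Lemma~\ref{lem:Bergman}. Two matroids on the same ground set with identical independence complexes are equal, so $M_P = M$ and $M$ is algebraic over $K$.

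The main obstacle is the claim $\Trop(P) = \cB(M)$ rather than merely $\Trop(P) \subseteq \cB(M)$. My plan is to appeal to an indecomposability property of Bergman fans: the support of $\cB(M)$ admits an essentially unique positive balanced weighting, namely the constant weight $1$ on every top cone. Consequently no proper pure-$r$-dimensional subset of $\cB(M)$ can be the support of a balanced tropical variety with positive weights---for if such a subset $X$ existed, extending its weighting by zero on the missing top cones would produce a nonnegative balanced weighting on $\cB(M)$ that is not a scalar multiple of the constant, contradicting uniqueness. Since $\Trop(P)$ is a balanced tropical variety of dimension $r$ contained in $\cB(M)$, its support must therefore coincide with $\cB(M)$. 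The indecomposability itself can be derived by analyzing the balancing equations at each ridge of $\cB(M)$---which correspond to chains of flats with exactly one gap---where the matroid-theoretic covering partition property of flats forces the weights on the top cones adjacent to that ridge to be equal, and propagation through the adjacency graph of top cones then yields uniqueness across all of $\cB(M)$.
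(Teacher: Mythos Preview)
Your proposal is correct and follows essentially the same approach as the paper. Both arguments reduce set-theoretic realizability by an arbitrary ideal to realizability by a prime ideal via the fact that $\cB(M)$ admits (up to scaling) a unique positive balanced weighting---the paper cites this as \cite[Theorem~38]{HuhThesis} and attributes it to shellability together with the flat-partition property, while you sketch the same result via the local balancing equations at ridges and propagation through the facet--ridge adjacency graph---and both then invoke Lemmas~\ref{lem:same} and~\ref{lem:Bergman} to conclude.
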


\begin{proof}
For any matroid $M$, the only
weights on the Bergman fan $\cB(M)$ that satisfy the balancing condition are those
that give the same weight to all maximal
cones~\cite[Theorem~38]{HuhThesis}.  This can be proved using 
shellability of $\cB(M)$ and the flat partition property of matroids.
It follows that $\cB(M)$ does not contain proper tropical subvarieties of the same dimension.  If $\cB(M)$ is
the tropicalization of an ideal $J$, then all the top dimensional
associated primes of~$J$ have tropicalization equal to $\cB(M)$. Thus
if $\cB(M)$ is realizable, then it is realizable by a prime ideal.
The result then follows from Lemmas~\ref{lem:same} and~\ref{lem:Bergman}. 
\end{proof}

There has been speculations that perhaps every Bergman fan is realizable over every algebraically closed field as the tropicalization of an ideal if the weights are allowed to be scaled up. The existence of non-algebraic matroids and Theorem~\ref{thm:main} tell us that this is not the case.  % Using the Hodge Index Theorem, Babaee and Huh constructed a $2$-dimensional tropical variety in $\QQ^4$ that is not realizable for any choice of weights~\cite{BabaeeHuh}, although it is not a Bergman fan.
\begin{corollary}
\label{cor:Huh}
There exist infinitely many Bergman fans that are not realizable over any field, with respect to any weight.
\end{corollary}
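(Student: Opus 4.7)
The plan is to combine Theorem~\ref{thm:main} with the infinite families of non-algebraic matroids that were already cited in the introduction, namely Lindstr\"om's constructions in~\cite{Lindstrom87, Lindstrom88}. Concretely, for each loop-free matroid $M$ that is not algebraic over any field, the Bergman fan $\cB(M)$ is a bona fide tropical variety (balanced with all weights equal to $1$), so it makes sense to ask whether it is set-theoretically realizable. By Theorem~\ref{thm:main}, a set-theoretic realization over some field $K$ would force $M$ to be algebraic over $K$, contradicting the choice of $M$. Hence $\cB(M)$ is not set-theoretically realizable over any field.

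The second step is to handle the clause \emph{with respect to any weight}. Since set-theoretic realizability ignores weights by definition, if $\cB(M)$ with some choice of positive integer weights were the tropicalization of an ideal, then the underlying fan would be set-theoretically realizable, which we have just ruled out. So no rescaling of the weights helps.

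Finally, to get \emph{infinitely many} such Bergman fans I would invoke Lindstr\"om's rank-$3$ family from~\cite{Lindstrom87}, whose members live on ground sets of unbounded size, or alternatively the family from~\cite{Lindstrom88} that contains one non-algebraic matroid of each rank~$\geq 4$. In either case the underlying matroids have either different ground set sizes or different ranks, and since $\cB(M)$ lies in $\QQ^{|M|}$ and has dimension $\rank(M)$, the resulting Bergman fans are pairwise distinct. The main obstacle is really nothing beyond quoting Lindstr\"om's constructions correctly; once they are invoked, Theorem~\ref{thm:main} does all the work, and the only subtlety is the harmless observation that weights are irrelevant for set-theoretic realizability.
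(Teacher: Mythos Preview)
Your proposal is correct and matches the paper's approach: the corollary is presented there as an immediate consequence of Theorem~\ref{thm:main} together with the existence of infinitely many non-algebraic matroids (Lindstr\"om's families cited in the introduction), and your handling of the ``with respect to any weight'' clause via the definition of set-theoretic realizability is exactly the intended reading. The only implicit check you might make explicit is that Lindstr\"om's matroids are loop-free, so that $\cB(M)$ is defined; this is indeed the case for those constructions.
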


We end with some of open problems.
\begin{enumerate}
\item A rational homology class in a complete variety is called {\em prime} if some positive multiple of it is the class of an irreducible subvariety.  
Is every Bergman fan, considered as a rational homology class in the permutohedral variety, a limit of prime classes up to numerical equivalence?   This question is due to June Huh~\cite[\S4.3]{HuhThesis}.  
\item Is the converse of Theorem~\ref{thm:main} true?
\item Which tropical varieties have matroidal independence complexes? What is the right combinatorial substitute for the irreducibility condition for varieties?
\end{enumerate}

\bigskip

\begin{center}
{\bf Acknowledgments}
\end{center}

I thank Matt Baker, Dustin Cartwright, Anders Jensen, Diane Maclagan, and the referees for helpful discussions and comments.  This work was supported by the NSF-DMS grant \#1101289.

\bibliographystyle{amsalpha}
\bibliography{algMatroid}

 \end{document}